\documentclass[runningheads,a4paper,envcountsame]{llncs}

\usepackage{amsfonts}
\usepackage{amssymb}
\usepackage{mathtools}
\usepackage[all,cmtip]{xy}
\usepackage[bookmarks=false]{hyperref}
\usepackage{xcolor}
\usepackage{todonotes}
\usepackage{url}
\newtheorem{defini}{Definition}
\newtheorem{thm}{Theorem}

\renewcommand{\phi}{\varphi}

\title{A Note on Power-OTMs}
\author{Merlin Carl}
\date{November 2024}
\institute{Institut f\"ur Mathematik, Europa-Universit\"at Flensburg}

\begin{document}

\maketitle

\begin{abstract}
We consider the computational strength of Power-OTMs, i.e., ordinal Turing machines equipped with a power set operator, and study a notion of realizability based on these machines. When parameters are allowed, these machines are, modulo access to a global well-ordering, equivalent to the Set Register Machines defined by Robert Passmann in \cite{Passmann}, and while most of the results on the realizability of Power-OTMs are analogous to results obtained by Passmann, the settings lead to different results concerning the axiom of choice. As we will see, the computational strength of power-OTMs can, depending on the set-theoretical background, also differ from that of Set Register Machines.
\end{abstract}

%ggf. interessant: https://mathoverflow.net/questions/180727/is-it-consistent-with-zfc-or-zf-that-every-definable-family-of-sets-has-at-lea/180734#180734

%FRAGE: Gilt Sacks für parameterfreie Power-OTMs?

\section{Introduction}

Ordinal Turing Machines (OTMs) are a model of transfinite computability introduced by Peter Koepke in \cite{Koepke:OTMs}. A notion of realizability based on OTMs was considered in \cite{CGP}; the guiding idea here is to consider OTM-computable functions as the underlying notion of effectivity on which the realizability notion is then based. A basic result on OTMs is that (provably in ZFC), OTMs cannot compute power sets. However, it has been argued that the formation of power sets should be regarded as a basic constructive operation in set theory, and realizability notions based on this idea have been defined and studied, see, e.g., Rathjen \cite{Rathjen:Power}, p. 3 or Moss \cite{Moss}, p. 248. This motivates the study of OTMs equipped with an extra command for forming power sets, which we call \textit{power-OTMs}. After finishing this note, we became aware that a very similar investigation was conducted earlier by Robert Passmann in \cite{Passmann}, whose ``Set Register Machines'' are essentially a register variant of power-OTMs, designed to avoid coding issues (the equivalence of the two models when the power-set operator is dropped is proved in \cite{Passmann}, Theorem 3.8). Below, we will indicate the corresponding results about SRMs. It still seems to be worth noting that these results can be integrated in the more familiar OTM-setting. Passmann allows his SRMs the use of arbitrary set parameters; in contrast, we consider only the cases where (i) a single ordinal parameter and (ii) no parameters are allowed. The results concerning the latter seem to be essentially new. Moreover, unlike Passmann, we do not assume the existence global well-ordering, which changes the realizability of the axiom of choice.

\section{Basic Definitions and Results}

We assume that the reader is familiar with Koepke's Ordinal Turing Machines (OTMs), see \cite{Koepke:OTMs} or \cite{CarlBuch}, section 2.5.6. For convenience, we will consider OTMs with four tapes: An input tape, a scratch tape, an output tape and an extra tape for carrying out the operations to which we are reducing. 

\begin{defini}{\label{F-OTM}}
Let $F:V\rightarrow V$ be a class function. An $F$-encoding is a map $\hat{F}:\mathfrak{P}(\text{On})\rightarrow\mathfrak{P}(\text{On})$ which, for every $c_{x}\subseteq\text{On}$ that codes a set $x$, maps $c_{x}$ to a code $c_{F(x)}\subseteq\text{On}$ for $F(x)$. An $F$-OTM-program $P$ is an OTM-program using four tapes that has as an additional command an $F$-command. We call the four tapes the input tape, the scratch tape, the output tape and the $F$-tape. 

If $\hat{F}$ is an $F$-encoding, then the computation of $P$ (on a given input, $x$ if there is one) is obtained by running the program as usual (according to the definition of OTM-computations in \cite{Koepke:OTMs}), and, whenever the $F$-command is carried out, replacing the current content $c$ of the $F$-tape by $\hat{F}(c)$. We denote this computation by $P^{\hat{F}}(x)$. 

If, for all inputs, the set encoded by the output of $P$ is the same for all encodings of $F$ and all encodings of the input $x$, we call $F$ \textit{coding-stable}.\footnote{Similar notions are used in \cite{CGP} (there called \textit{uniformity}) or \cite{Ca2016}.} In this case, we will denote the set encoded by the output of $P^{\hat{F}}(c)$ for any encoding $\hat{F}$ of $F$ and any code $c_{x}$ for a set $x$ by $P^{F}(x)$. 

A set $X$ is called $F$-OTM-computable if there are a coding-stable $F$-OTM-program $P$ (and perhaps an ordinal parameter $\rho\in\text{On}$) that halts with a code for $X$ on its output tape. $X$ is called $F$-OTM-\textit{recognizable}\footnote{The notion of recognizability was first defined by Hamkins and Lewis for Infinite Time Turing Machines in \cite{Hamkins-Lewis}, see Theorem 4.9.} if and only if there is a coding-stable OTM-program $P$ (and, perhaps, a parameter $\rho\in\text{On}$) such that, for all $s\subseteq\text{On}$: $$P^{F}(s,\rho)=\begin{cases}1\text{, if }s\text{ codes }x\\0\text{, otherwise}\end{cases}$$
IF we can take $\rho=0$, we speak of parameter-free $F$-OTM-computability and -recognizability, respectively. 

The definition of $F$-realizability is obtained from the definition of uniform OTM-realizability in \cite{CGP} by replacing OTMs with set parameters by coding-stable $F$-OTMs everyhwere. 

Similarly, the definition of recognizable $F$-realizability, or $F$-$r$-realizability, is obtained from the definition of $r$-realizability in \cite{Ca2024} by replacing OTMs with coding-stable $F$-OTMs everywhere.

If $F$ is the power set operator $x\mapsto\mathfrak{P}(x)$, we speak of \textit{power-OTMs} and \textit{power-realizability}. 

We fix a natural enumeration $(P_{i}:i\in\omega)$ of power-OTM-programs. 
\end{defini}

\begin{remark}
Power-OTM-computability is conceptually closely connected to Moss' concept of ``power-set recursion'', defined in \cite{Moss}. We will see below that power-set recursion is strictly weaker than parameter-free power-OTM-computability. Moreover, Moss does not consider realizability in his work, which is the main motivation of the present paper.    
\end{remark}

An important feature of OTMs is their ability to enumerate Gödel's constructible universe $L$. Similarly, Power-OTMs can enumerate the set-theoretical universe $V$:

\begin{lemma}{\label{enumerating the universe}} [Cf. Passmann, \cite{Passmann}]
\begin{enumerate}
\item There is a (parameter-free) non-halting power-OTM-program $P_{\text{level-enum}}$ such that, for every ordinal $\alpha$, there is an ordinal $\tau$ such that, at time $\tau$, $P_{\text{level-enum}}$ has a code for $V_{\alpha}$ on its output tape. Moreover, there is a non-halting power-OTM $P_{\text{set-enum}}$ such that, for every set $x$, there is an ordinal $\tau$ such that, at time $\tau$, $P_{\text{set-enum}}$ has a code for $x$ on its output tape.\footnote{This is essentially Passmann, \cite{Passmann}, Proposition 3.12.}
\item There is a (parameter-free) power-OTM-program $P_{\text{level}}$ such that, for every code $c_{x}$ for a set $x$, $P_{\text{level}}(x)$ halts with a code for the $\in$-minimal $V_{\alpha}\ni x$ on its output tape.\footnote{See Passmann, \cite{Passmann}, Proposition 3.10.}
\item There is a (parameter-free) power-OTM-program $P_{\text{card}}$ which, given a code $c_{x}$ for a set $x$, computes $\text{card}(x)$.
\end{enumerate}
\end{lemma}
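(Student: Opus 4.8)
The plan is to imitate the standard OTM-enumeration of $L$, replacing the step that builds the next constructible level with an application of the power-set command. For part (1), I would compute codes $c_{V_\alpha}$ by transfinite recursion along the computation's own clock: start with a code for $V_0=\emptyset$; at a successor step, copy the current code $c_{V_\alpha}$ onto the $F$-tape, invoke the power-set command to obtain some code for $\mathfrak{P}(V_\alpha)=V_{\alpha+1}$, and then recode it into a fixed canonical form; at a limit step $\lambda$, read off the code for $V_\lambda=\bigcup_{\alpha<\lambda}V_\alpha$ from the tape. The successor step is exactly where the extra power of power-OTMs is used: forming $\mathfrak{P}(V_\alpha)$ from $V_\alpha$ is provably impossible for plain OTMs, whereas here it is a single command. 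Ordinary OTM-computable bookkeeping — in particular deciding, for two given codes, whether they code the same set, which reduces to deciding isomorphism of well-founded extensional relations on ordinals — suffices for the canonicalization.

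The main obstacle is the limit step, and it is entirely a matter of choosing the coding so that the levels cohere. I would fix a canonical coding in which $c_{V_\alpha}$ is an initial segment of $c_{V_\beta}$ whenever $\alpha<\beta$; concretely, one in which the ordinals used to name the elements of $V_\alpha$ never change as $\alpha$ increases, so that the symbols written on the output tape are monotone. Then the $\liminf$-behaviour of OTM-tapes at limit times automatically yields a code for $\bigcup_{\alpha<\lambda}V_\alpha=V_\lambda$, with no further work at limits; the only genuine content is the verification that the successor step can always recode $\hat{F}(c_{V_\alpha})$ so as to preserve this nesting, which is a routine (if tedious) OTM-computation. Once the levels are enumerated, $P_{\text{set-enum}}$ is obtained by dovetailing: run $P_{\text{level-enum}}$ and, for each level $V_\alpha$ produced, successively output codes for each of its elements; since every set lies in some $V_\alpha$, every set is eventually listed.

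For part (2), I would run $P_{\text{level-enum}}$ on the side and, at each stage $\alpha$, test whether the input set $x$ occurs as an element of $V_\alpha$ — that is, whether some element-code of $c_{V_\alpha}$ codes the same set as $c_x$. This test is OTM-decidable by the isomorphism-checking subroutine mentioned above. Because $x\in V_{\mathrm{rank}(x)+1}$, the search halts, and the first $V_\alpha$ for which the test succeeds is the $\in$-minimal level containing $x$; the machine outputs its code.

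For part (3), I would search the ordinals in increasing order for the least $\alpha$ equinumerous with $x$, which under AC is exactly $\mathrm{card}(x)$. For a given candidate $\alpha$, a bijection $x\to\alpha$ is an element of $\mathfrak{P}(x\times\alpha)$; since a code for $x\times\alpha$ is OTM-computable from $c_x$ and $\alpha$, I can apply the power-set command to obtain a code for $\mathfrak{P}(x\times\alpha)$ and then OTM-search its listed elements for one coding a bijection. The least $\alpha$ for which such an element is found is $\mathrm{card}(x)$, at which point the machine halts and outputs $\alpha$. In all three parts coding-stability is immediate, since the output (a canonical code, or an ordinal) is determined by $x$ alone and does not depend on the chosen encodings.
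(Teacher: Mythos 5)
Your proposal is correct and follows essentially the same route as the paper for all three parts: iterate the power-set command with unions at limits for $P_{\text{level-enum}}$, dovetail element codes for $P_{\text{set-enum}}$, and search the enumerated levels for the input set for $P_{\text{level}}$ (your extra care about making the level codes cohere at limit stages is a reasonable way to fill in the paper's phrase ``computing unions at limit times''). The only divergence is in part (3), where you search ordinals $\alpha$ in increasing order and probe $\mathfrak{P}(x\times\alpha)$ for a bijection, while the paper first finds a level $V_{\alpha}\ni x$ containing some bijection of $x$ with an ordinal and then minimizes inside that level; both variants are equally routine.
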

\begin{proof}
We give a brief sketch. (The arguments work as those of Passmann quoted above.) $P_{\text{level-enum}}$ is obtained by starting with the empty set, iteratively applying the power set operator, storing the intermediate results on the working tape and computing unions at limit times. $P_{\text{set-enum}}$ is then obtained from $L_{\text{level-enum}}$ by additionally running through each newly written level and writing codes for all of its elements to the output tape. If a set $x$ is given, one can search each level occuring in the run of $P_{\text{level-enum}}$ to check whether it contains $x$ and halt once the answer is positive, which yields $P_{\text{level}}$. Finally, for $P_{\text{card}}$, first use $P_{\text{level-enum}}$ to find some $V_{\alpha}\ni x$ that contains a bijection between $x$ and some ordinal for some given set $x$, and then search through $V_{\alpha}$ for the minimal ordinal for which there is such a bijection.\footnote{This is essentially the argument for the reverse direction of Passmann, \cite{Passmann}, Proposition 3.10.} 
\end{proof}

The ability of power-OTMs to enumerate the whole set-theoretical universe has the consequence that computability and recognizability coincide:

\begin{thm}{\label{no lost melodies}}
A set is power-OTM-computable if and only if it is power-OTM-recognizable. In particular, power-realizability coincides with power-r-realizability.%\todo{power-r-realizability noch definieren!}
\end{thm}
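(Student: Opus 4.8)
The plan is to prove the two implications separately, noting that the forward direction (computable $\Rightarrow$ recognizable) is the routine one and holds for essentially the same reason as for ordinary OTMs, while the reverse direction (recognizable $\Rightarrow$ computable) is where the enumeration power of power-OTMs from Lemma~\ref{enumerating the universe} does the real work.

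First, for computable $\Rightarrow$ recognizable, suppose $P$ is a coding-stable power-OTM-program (with parameter $\rho$) halting with a code $c_{X}$ for $X$. To recognize $X$, on input $s\subseteq\text{On}$ I would first run $P$ to obtain $c_{X}$, and then decide whether $s$ and $c_{X}$ code the same set. The latter is the problem of comparing two codes for well-founded extensional relations, which is already OTM-computable (by the usual $\in$-recursion collapsing both codes and testing for equality), hence \emph{a fortiori} power-OTM-computable; I then output $1$ if the coded sets agree and $0$ otherwise. Because this comparison depends only on the sets coded by $s$ and by $c_{X}$, and $P$ is coding-stable, the resulting program is coding-stable and correctly recognizes $X$, with the same parameter $\rho$ (and parameter-free if $P$ was).

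For the converse, which is the heart of the statement, suppose $P$ is a coding-stable recognizer for $X$ with parameter $\rho$. The idea is to search the whole universe for a code that $P$ accepts. Using Lemma~\ref{enumerating the universe}, I would run the universe-enumeration, producing codes for $V_{0}, V_{1}, \dots$ and, within each newly produced level $V_{\alpha}$, codes $c_{x}$ for each $x\in V_{\alpha}$. For each such candidate code $c_{x}$ I run the recognizer $P(c_{x},\rho)$ to completion; since every recognizer computation halts with output $0$ or $1$, this inner computation always terminates, so the search can be carried out as a dovetailed transfinite sweep through all sets. As soon as $P$ outputs $1$ on some candidate $c_{x}$, I halt and write $c_{x}$ on the output tape. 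Since $X$ is a set it belongs to some $V_{\alpha}$ and its code is therefore eventually produced by the enumeration, so the search halts; and by correctness of $P$ the first accepted code is a code for $X$. The resulting program halts with a code for $X$, uses only the parameter $\rho$ (none if $P$ was parameter-free), and is coding-stable because its output is determined by $X$ alone, the enumeration being canonical and the acceptance test depending only on the sets coded. Hence $X$ is power-OTM-computable.

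I expect the main obstacle to be the bookkeeping of the reverse direction: organizing the nested, transfinitely long search (over levels, and over the elements within each level) so that each candidate is genuinely tested and none is skipped, and verifying that the composite program is coding-stable rather than merely halting with some code for $X$. The crucial point that makes this unproblematic is that recognizer computations always halt, so the inner loop never diverges and the outer search is a bona fide transfinite recursion; everything else is routine OTM-programming. Finally, the realizability consequence follows because the entire construction is uniform in the index of $P$: from an index of a recognizer one effectively obtains an index of a computer and conversely, so recognizable realizers and computable realizers can be interchanged, whence power-realizability and power-$r$-realizability coincide.
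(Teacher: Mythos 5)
Your proof is correct and follows essentially the same route as the paper: the forward direction by computing a code and comparing it to the oracle, and the reverse direction by enumerating the universe via Lemma~\ref{enumerating the universe} and testing each produced code with the recognizer until one is accepted. The extra care you take with coding-stability and the uniformity needed for the realizability consequence is sound, though the paper leaves these points implicit.
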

\begin{proof}
    As usual, computable sets can be recognized by computing (a code for) them and then comparing that to the oracle. 

    In the other direction, if $Q$ is a power-OTM-program (possibly with ordinal parameters) that recognizes a set $x$, we can compute $x$ by running $P_{\text{level-enum}}$ and checking all elements of all occuring levels with $Q$. Eventually, a code for $x$ will be produced and recognized, in which case the program writes that code to the output tape and halts.
\end{proof}

This shows that the whole set-theoretical universe is ``recursively enumerable'' in the sense of Power-OTMs. Concerning the computational strength with ordinal parameters, we obtain the following:\footnote{This should be contrasted with Passmann's setting, where, since arbitrary set parameters are allowed, every set is trivially SRM-computable; and even if only ordinal parameters were allowed, then, since the machine is given access to a global well-ordering, every set would still be SRM$^{+}$-computable.}

\begin{thm}{\label{comp strength power-otms}}
A set $X$ is power-computable if and only if it is $\Sigma_{2}$-definable with ordinal parameters (i.e., an element of ``$\Sigma_{2}$-OD'').
\end{thm}
\begin{proof}
Suppose that $X$ is power-OTM-computable, and let $P$ be a power-program that computes $X$ in the parameter $\rho\in\text{On}$. For any particular encoding $\hat{\mathfrak{P}}$ of the power set class function, the halting computation $P^{\hat{\mathfrak{P}}}(\rho)$ will only use set many instances of $\mathfrak{P}$. Thus, $X$ is definable by the formula stating that there is a (set-sized) encoding of a partial (set-sized) function $f\subseteq\mathfrak{P}$ such that $P^{f}(\rho)$ halts without requiring instances of $\mathfrak{P}$ not coded in $f$ and with an output that codes $X$. This is expressible as a $\Sigma_{2}$-formula. %details später nachliefern: für $f$ reicht es, die potenzmenge einer hinreichend großen $V$-Stufe zu kennen, die Charakterisierung davon ist $\Sigma_{2}$...

Now suppose that $X$ is ordinal definable by a $\Sigma_{2}$-formula in the ordinal parameter $\rho$, and let $\psi(\rho,z):\Leftrightarrow\exists{x}\forall{y}\phi(x,y,\rho,z)$ (with $\phi$ a $\Delta_{0}$-formula) be such that $X$ is the unique $z$ for which this formula is true. We now explain how to power-compute $X$. Note that for any $a$ and $b$ for which $\forall{y}\phi(a,y,\rho,b)$ is false, there is such $y$ in $V_{\kappa^{+}}$ with $\kappa=\text{max}\{\text{card}(\text{tc}(a)),\text{card}(\text{tc}(b))\}$ by Lemma \ref{early witnesses} below. Thus, statements of this form are power-OTM-decidable by, for given $a$ and $b$, computing $\text{tc}(a)$ and $\text{tc}(b)$ (which can be done on a plain OTM), then computing their cardinalities (by Lemma \ref{enumerating the universe}(3), taking their maximum $\kappa$, computing its cardinal successor $\kappa^{+}$ (which can be done by first computing the cardinality $\delta$ of $\mathfrak{P}(\kappa)$ and then testing for each ordinal $\alpha$ between $\kappa$ and $\delta$ whether there is a bijection between $\alpha$ and $\kappa$, which, in turn, can be done by searching $\mathfrak{P}(\kappa\times\delta)$), computing $V_{\kappa^{+}}$ (as in  Lemma \ref{enumerating the universe}(2)) and searching $V_{\kappa^{+}}$ for a counterexample.
Now, to compute $X$, we proceed as follows: As described in Lemma \ref{enumerating the universe}, we enumerate $V$. For every ordered pair $(a,b)$ of sets occuring during this enumeration, we test in the way just desribed whether $\forall{y}\phi(a,y,\rho,b)$ is true. Since $\psi(\rho,X)$ holds by assumption, such a pair must eventually be found. Once it is found, we return $b$ (i.e., the second component of the pair). By definition, $b$ will be equal to $X$.
\end{proof}

\begin{remark}
As a consequence, the computational strength of power-OTMs with ordinal parameters is different from that of SRM$^{+}$ that use ordinal parameters in set-theroretical universes in which if not every element ordinal definable, such as a Cohen-generic extension of $L$. 
\end{remark}

%DAS FOLGENDE IST FALSCH! berechenbarkeit erfordert ja unabhängigkeit von der codierung von $F$!
%\begin{corollary}{\label{comp strength power-otms}} [Cf. Passmann, \cite{Passmann}, Proposition 3.12]
%Every set is computable by Power-OTMs with ordinal parameters.
%\end{corollary}

Without ordinal parameters, we need to work a bit more. 

\begin{defini}{\label{sigma P def}}
Let $\mathcal{L}_{\in,\mathcal{P}}$ denote the extension of the language of set theory with the one-place function symbol $\mathcal{P}$ (to be interpreted as the power set operator).\footnote{This language was considered, e.g., by Rathjen, \cite{Rathjen:Power}.} 
Let $\Sigma_{1}^{\mathcal{P}}$ denote the set of $\Sigma_{1}$-formulas in that language.\footnote{Cf. ibid.} For every $\Sigma_{1}^{\mathcal{P}}$-formula $\phi$, define $$\alpha(\phi):=\begin{cases}\text{the smallest ordinal }\alpha\text{ such that }V_{\alpha}\models\phi\text{, if it exists}\\0\text{, otherwise.}\end{cases}$$
Finally, let $\sigma^{\mathcal{P}}:=\text{sup}\{\alpha(\phi):\phi\in\Sigma_{1}^{\mathcal{P}}\}$. We  fix a natural enumeration $(\psi_{i}:i\in\omega)$ of the $\Sigma_{1}^{\mathcal{P}}$-formulas. 
\end{defini}

\begin{lemma}{\label{no max sigma1 p}}
    The set $\Phi:=\{\alpha(\phi):\phi\in\Sigma_{1}^{\mathcal{P}}\}$ has no maximum. 
\end{lemma}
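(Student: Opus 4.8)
The plan is to show something slightly stronger than stated, namely that for every $\Sigma_{1}^{\mathcal{P}}$-formula $\phi$ there is another $\Sigma_{1}^{\mathcal{P}}$-formula $\psi$ with $\alpha(\psi)>\alpha(\phi)$; since $\Phi$ is a set of ordinals, this immediately gives that it has no maximum. Formulas satisfied by no level contribute the value $0$, which is plainly not maximal, so the heart of the matter is a \emph{satisfiable} $\phi_{0}$ whose least witnessing level is $\beta:=\alpha(\phi_{0})$, i.e.\ $V_{\beta}\models\phi_{0}$ while $V_{\gamma}\not\models\phi_{0}$ for all $\gamma<\beta$. The idea is to write a formula $\psi$ asserting the existence of the minimal witnessing level $V_{\beta}$ as a completed object; since $V_{\beta}$ first occurs as an element one stage later, this forces $\alpha(\psi)>\beta$.

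The key observation is that the power set operator lets us capture the cumulative hierarchy by a bounded formula. First I would write down a $\Delta_{0}^{\mathcal{P}}$-formula $\mathrm{Lev}(x)$ expressing ``$x$ is some $V_{\gamma}$'': a set has this form exactly when it is transitive and supertransitive (closed under subsets of its elements), and supertransitivity can be stated with bounded quantifiers as $\forall y\in x\,\forall z\in\mathcal{P}(y)\,(z\in x)$, the unbounded ``$z\subseteq y$'' having been replaced by the bounded ``$z\in\mathcal{P}(y)$''. Next, writing $\phi_{0}=\exists w\,\theta(w)$ with $\theta\in\Delta_{0}^{\mathcal{P}}$, I form the relativization $\phi_{0}^{x}:=\exists w\in x\,\theta^{x}(w)$; since any $x$ with $\mathrm{Lev}(x)$ is transitive and closed under $\mathcal{P}$, this is again $\Delta_{0}^{\mathcal{P}}$ and, on such $x$, expresses exactly ``$x\models\phi_{0}$''. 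Combining these, the formula
\[
\mathrm{Min}(x)\ :=\ \mathrm{Lev}(x)\wedge\phi_{0}^{x}\wedge\forall z\in x\,\big(\mathrm{Lev}(z)\rightarrow\neg\phi_{0}^{z}\big)
\]
is $\Delta_{0}^{\mathcal{P}}$ and says ``$x$ is the $\in$-least level satisfying $\phi_{0}$''; note that every $V_{\delta}$ with $\delta<\gamma$ occurs as an element of $V_{\gamma}$, so the bounded quantifier $\forall z\in x$ genuinely ranges over all smaller levels.

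I would then set $\psi:=\exists x\,\mathrm{Min}(x)$, a $\Sigma_{1}^{\mathcal{P}}$-sentence. By $\Delta_{0}^{\mathcal{P}}$-absoluteness between $V$ and the (power-set-closed) levels, the unique set satisfying $\mathrm{Min}$ is $V_{\beta}$ itself, directly from the minimality of $\beta$. Hence $V_{\alpha}\models\psi$ requires $V_{\beta}$ to be available as an element, which happens only for $\alpha>\beta$; conversely every limit $\alpha>\beta$ does satisfy $\psi$, so $\psi$ is satisfiable and $\alpha(\psi)>\beta=\alpha(\phi_{0})$. As $\phi_{0}$ was arbitrary, no $\alpha(\phi_{0})$ can be the maximum of $\Phi$.

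The step I expect to be delicate is the absoluteness bookkeeping, and in particular the interpretation of the symbol $\mathcal{P}$ at levels that are not closed under the true power set. The clean statement of $\Delta_{0}^{\mathcal{P}}$-absoluteness holds between $V$ and $V_{\alpha}$ for limit $\alpha$, where $V_{\alpha}$ is genuinely closed under $\mathcal{P}$; I would therefore make explicit (or adopt as a convention) that satisfaction of $\mathcal{L}_{\in,\mathcal{P}}$-formulas is evaluated at such levels, so that no spurious witness for $\mathrm{Min}(x)$ can appear at a level $\le\beta$, and the conclusion $\alpha(\psi)>\beta$ is secured regardless of the fine structure at successor stages.
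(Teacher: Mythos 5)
The paper states this lemma without proof, so there is nothing to compare your argument against; judged on its own terms, your overall strategy (given a satisfiable $\phi_{0}$ with least witnessing level $V_{\beta}$, write a $\Sigma_{1}^{\mathcal{P}}$-sentence asserting the existence of that least level as a completed object, which can then only be witnessed above $\beta$) is the natural one and can be made to work. However, there is a genuine gap in the key technical step: your formula $\mathrm{Lev}(x)$ does \emph{not} characterize the levels of the cumulative hierarchy. A transitive, supertransitive set need not be a $V_{\gamma}$: the set $H_{\omega_{1}}$ of hereditarily countable sets is transitive and closed under subsets of its elements (if $\text{tc}(y)$ is countable and $z\subseteq y$, then $\text{tc}(z)\subseteq\text{tc}(y)$ is countable), yet it is not a rank level, since it properly contains $V_{\omega+1}$ but does not contain $\mathcal{P}(\omega)$ as an element. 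The same example refutes your claim that any $x$ with $\mathrm{Lev}(x)$ is closed under $\mathcal{P}$; note also that even genuine successor levels fail this, since $V_{\delta}\in V_{\delta+1}$ but $\mathcal{P}(V_{\delta})=V_{\delta+1}\notin V_{\delta+1}$. Your argument leans on the exact characterization in both directions: spurious sets satisfying $\mathrm{Lev}$ inside $V_{\beta}$ can falsify the minimality conjunct of $\mathrm{Min}(V_{\beta})$, and a spurious low-rank witness of $\mathrm{Min}$ could make $\psi$ true at or below level $\beta$. So the assertion that ``the unique set satisfying $\mathrm{Min}$ is $V_{\beta}$ itself'' is unjustified as written.

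The repair is standard: express ``$x=V_{\gamma}$ for some $\gamma$'' via the defining recursion, i.e., ``there is a function $f$ with domain $\gamma+1$ such that $f(0)=\emptyset$, $f(\delta+1)=\mathcal{P}(f(\delta))$, $f(\lambda)=\bigcup_{\delta<\lambda}f(\delta)$ for limit $\lambda$, and $f(\gamma)=x$''. Since each $f(\delta)\subseteq x$ and $\gamma=\text{On}\cap x$, the witness $f$ is bounded by an explicit $\mathcal{P}$-term in $x$, so this version of $\mathrm{Lev}$ is $\Delta_{0}^{\mathcal{P}}$ and can appear negated inside $\mathrm{Min}$ without leaving $\Sigma_{1}^{\mathcal{P}}$. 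With that replacement, and with the convention you already flag for interpreting the symbol $\mathcal{P}$ at levels not closed under the true power set (a point on which the paper itself is silent, since it never specifies the semantics of $\mathcal{L}_{\in,\mathcal{P}}$ over arbitrary $V_{\alpha}$), your argument goes through.
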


The following lemma is part of the folklore. We include a proof for the sake of completeness.

\begin{lemma}{\label{early witnesses}}
Let $y$ be a set, $\psi$ be $\Delta_{0}$, $t:=\text{tc}(\{y\})$ and $\kappa:=\text{card}(t)$. If $\exists{x}\psi(x,y)$ then there exists some $x$ with $x\in V_{\kappa^{+}}$ so that $\psi(x,y)$.%\todo{kommt dieser trick auch bei moss vor?} -- anscheinend nicht.
\end{lemma}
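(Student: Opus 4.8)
The plan is to run the standard Löwenheim–Skolem argument together with a Mostowski collapse, exploiting that $\Delta_{0}$ formulas are absolute between transitive structures. Throughout I assume $\kappa$ is infinite, which is the case of interest (if $t$ is finite the cardinality bookkeeping below is replaced by $|M|=\aleph_{0}$ and one reads $\kappa^{+}$ as the least cardinal above $\max(\kappa,\aleph_{0})$).

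First I would fix a witness: assuming $\exists x\,\psi(x,y)$, choose some $a$ with $\psi(a,y)$, and pick a limit ordinal $\theta$ large enough that $a,y\in V_{\theta}$. Since $\psi$ is $\Delta_{0}$ and $V_{\theta}$ is transitive, $V_{\theta}\models\psi(a,y)$. Next I would apply the downward Löwenheim–Skolem theorem to obtain an elementary substructure $M\prec V_{\theta}$ with $\text{tc}(\{y\})\cup\{a\}\subseteq M$ and $|M|=\kappa$; this is possible because the set I am forcing into $M$ has size $\kappa$ and the language is finite, so closing under Skolem functions does not increase the cardinality beyond $\kappa$.

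The crucial point is to collapse $M$ while keeping $y$ fixed. Being an elementary substructure of the extensional, well-founded structure $V_{\theta}$, the structure $(M,\in)$ is itself extensional and well-founded, so the Mostowski collapse $\pi\colon M\to N$ onto a transitive set $N$ is available. Because $\text{tc}(\{y\})$ is transitive and contained in $M$, an easy $\in$-induction shows that $\pi$ is the identity on $\text{tc}(\{y\})$; in particular $\pi(y)=y$. Since $\pi$ is an $\in$-isomorphism and $M\models\psi(a,y)$, we obtain $N\models\psi(a',y)$ for $a':=\pi(a)$, and then $\Delta_{0}$-absoluteness between the transitive set $N$ and $V$ upgrades this to $\psi(a',y)$ holding in $V$.

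It remains to locate $a'$. Since $|N|=|M|=\kappa$ and $N$ is transitive, every $z\in N$ satisfies $\text{tc}(z)\subseteq N$ and hence $|\text{tc}(z)|\le\kappa$; an $\in$-induction using the regularity of the successor cardinal $\kappa^{+}$ then shows $\text{rank}(z)<\kappa^{+}$. Therefore $N\subseteq V_{\kappa^{+}}$, and in particular $a'\in V_{\kappa^{+}}$, which is the desired witness. I expect the only genuinely delicate steps to be the two pieces of collapse-bookkeeping: verifying that $\text{tc}(\{y\})\subseteq M$ forces $\pi$ to fix $y$, and the rank estimate showing that a transitive set of cardinality $\kappa$ sits inside $V_{\kappa^{+}}$; both are routine, but they are exactly where the choice of $\kappa^{+}$ is used.
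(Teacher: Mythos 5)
Your proposal is correct and follows essentially the same route as the paper's proof: take a hull of $\mathrm{tc}(\{y\})$ together with a witness, Mostowski-collapse it, observe that the collapse fixes $y$ because $\mathrm{tc}(\{y\})$ is transitive and contained in the hull, invoke $\Delta_{0}$-absoluteness, and bound the collapsed witness in $V_{\kappa^{+}}$ via the cardinality of the transitive collapse. The only cosmetic differences are that you use a full elementary submodel obtained from L\"owenheim--Skolem where the paper takes a $\Sigma_{1}$-hull, and you verify the inclusion $H_{\kappa^{+}}\subseteq V_{\kappa^{+}}$ by a direct rank computation where the paper cites Kunen.
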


%Kunen, Lemma 6.2: Für jede unendliche Kardinalzahl $\kappa$ gilt $H_{\kappa}\subseteq V_{\kappa}$

\begin{proof} Without loss of generality, let $\kappa\geq\omega$. By \cite{Kunen}, Lemma 6.2, p. 131, this implies $t\in V_{\kappa^{+}}$. 
%Es ist $t\in V_{\alpha+1}$, denn da $y\subseteq V_{\alpha}$ und $V_{\alpha}$ transitiv ist, folgt $t\subseteq V_{\alpha}$, also $t\in V_{\alpha+1}$.
 Now let $\beta$ be minimal such that $x\in V_{\beta}$; without loss of generality, we assume that $\beta>\kappa$. Let $H$ bet the transitive collapse of the $\Sigma_{1}$-hull of $t\cup\{x\}$ in $V_{\beta}$, and let $\pi:\Sigma_{1}\{t\cup\{x\}\}\rightarrow H$ be the collapse map. Then $\kappa\leq\text{card}(H)\leq\kappa\omega=\kappa$, so $\text{card}(H)=\kappa$. Moreover, $H$ is transitive, so we have $H\in H_{\kappa^{+}}$.\footnote{Here, $H_{\kappa}$ refers, as usual, to the set of sets $x$ whose transitive closure has cardinality strictly less than $\kappa$.} Since $t\subseteq H$, we have $\pi(y)=y$. Now $H\models\psi(\pi(x),y)$, and since $\psi$ is a $\Delta_{0}$-formula, we have $\psi(\pi(x),y)$ in $V$. Because of $\pi(x)\in H \in H_{\kappa^{+}}\subseteq V_{\kappa^{+}}$ (again because of \cite{Kunen}, Lemma 6.2) and because $V_{\kappa^{+}}$ is transitive, we have $\pi(x)\in V_{\kappa^{+}}$.
\end{proof}

\begin{thm}
\begin{enumerate}
\item $\sigma^{\mathcal{P}}$ is the smallest ordinal $\alpha$ such that $V_{\alpha}$ contains all sets that are computable by a parameter-free power-OTM.
\item In particular, there are cofinally in $\sigma^{\mathcal{P}}$ many ordinals $\alpha$ such that $V_{\alpha}$ is power-OTM-computable without parameters.
\item $\sigma^{\mathcal{P}}$ is a limit cardinal with cofinality $\omega$.
\item There are a parameter-freely power-OTM-computable set $X$ and an $\in$-formula $\phi$ such that $\{x\in X:\phi(x,\rho)\}$ is not power-OTM-computable.
\item The set of power-OTM-computable sets is not transitive. 
\item $V_{\sigma^{\mathcal{P}}}$ is admissible, but not $\Sigma_{2}$-admissible. 
\end{enumerate}
\end{thm}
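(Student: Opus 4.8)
The plan is to treat the six claims in the order given, using the enumeration machinery of Lemma~\ref{enumerating the universe} together with the witness bound of Lemma~\ref{early witnesses}. For (1) and (2) I would argue in two directions. First, for any program $P_i$ the assertion ``$P_i$ halts on empty input'' can be written as a $\Sigma_1^{\mathcal{P}}$-formula $\phi_i$: it asserts the existence of a set coding a halting run of $P_i$, the correctness of which is a bounded condition once the power-set symbol $\mathcal{P}$ is available to check the $F$-steps (``$d$ codes $\mathcal{P}$ of the set coded by $c$'' is $\Delta_0^{\mathcal{P}}$). Hence the $\in$-minimal $V_\alpha$ witnessing $\phi_i$ has $\alpha<\sigma^{\mathcal{P}}$ (using Lemma~\ref{no max sigma1 p} for strictness), and since the whole run, in particular the output, lives inside this $V_\alpha$, every parameter-freely computable set has rank $<\sigma^{\mathcal{P}}$; thus $V_{\sigma^{\mathcal{P}}}$ contains all of them. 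This first direction is the step I expect to carry the real weight: making precise that ``$P_i$ halts'' is genuinely $\Sigma_1^{\mathcal{P}}$ and that the witnessing level bounds the entire run, with the correct reading of the power-set command, is where the work lies. Conversely, for each $\Sigma_1^{\mathcal{P}}$-formula $\phi$ with $\alpha(\phi)$ defined I run $P_{\text{level-enum}}$ and test level by level whether $V_\alpha\models\phi$ — a test a power-OTM can perform, since it can form power sets and search the coded $V_\alpha$ for a $\Sigma_1$-witness — halting with a code for $V_{\alpha(\phi)}$. This makes each $V_{\alpha(\phi)}$ parameter-freely computable, giving (2); as these ranks are cofinal in $\sigma^{\mathcal{P}}$ by definition, no $V_\beta$ with $\beta<\sigma^{\mathcal{P}}$ contains all computable sets, which together with the first direction pins the minimal such level to $V_{\sigma^{\mathcal{P}}}$ and yields (1).

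For (3), cofinality $\omega$ is immediate: $\sigma^{\mathcal{P}}$ is the supremum of the countable family $\{\alpha(\psi_i):i\in\omega\}$, which by Lemma~\ref{no max sigma1 p} has no maximum. To see it is a limit cardinal it suffices to show cardinals are cofinal below it. Given $\beta<\sigma^{\mathcal{P}}$, pick by (2) a computable $V_\alpha$ with $\alpha>\beta$; then $V_{\alpha+1}=\mathcal{P}(V_\alpha)$ is computable too, so $\kappa:=\operatorname{card}(V_{\alpha+1})$ is computable by $P_{\text{card}}$ and hence, by (1), satisfies $\kappa<\sigma^{\mathcal{P}}$. Since $\kappa=2^{|V_\alpha|}\ge 2^{|\beta|}>|\beta|$, we get $\kappa\ge|\beta|^{+}>\beta$, so $\kappa$ is a cardinal strictly between $\beta$ and $\sigma^{\mathcal{P}}$. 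Thus $\sigma^{\mathcal{P}}$ is a limit of cardinals, hence a limit cardinal.

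For (4) and (5) I would exhibit explicit non-computable objects inside computable ones. For (4) take $X=\omega$ and let $\phi(n)$ be the $\in$-formula expressing ``$P_n$ halts on empty input'' (the power-set steps checked via $\in$, so no $\mathcal{P}$ is needed); then $\{n\in\omega:\phi(n)\}$ is the halting set, which is not parameter-freely computable by the usual diagonal argument via the recursion theorem for power-OTMs (so one may take $\rho=0$). For (5), there are only countably many parameter-free programs, hence at most $\aleph_0$ computable sets, whereas $\mathcal{P}(\omega)$ is computable and has $2^{\aleph_0}$ elements; consequently $\mathcal{P}(\omega)$ contains non-computable elements, so the class of computable sets is not transitive.

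Finally, for (6), admissibility reduces to $\Delta_0$-collection in $V_{\sigma^{\mathcal{P}}}$, the other axioms holding in any $V_\lambda$ with $\lambda$ a limit cardinal. Given $a\in V_\gamma$ with $V_{\sigma^{\mathcal{P}}}\models\forall x\in a\,\exists y\,\psi(x,y)$ for $\Delta_0$ $\psi$, apply Lemma~\ref{early witnesses} to each $x\in a$: a witness lies in $V_{\kappa_x^{+}}$ with $\kappa_x=\operatorname{card}(\operatorname{tc}(\{x\}))\le|V_\gamma|=:\lambda$, so all witnesses lie in $V_{\lambda^{+}}$, and $\lambda^{+}<\sigma^{\mathcal{P}}$ by (3). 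Hence $V_{\lambda^{+}}$ collects the witnesses, establishing collection and admissibility. For the failure of $\Sigma_2$-admissibility, the map $i\mapsto\alpha(\psi_i)$ has a $\Delta_2$ graph over $V_{\sigma^{\mathcal{P}}}$ (``$V_\beta\models\psi_i$ and no earlier level does''), is defined on $\omega\in V_{\sigma^{\mathcal{P}}}$, and is cofinal in $\sigma^{\mathcal{P}}$ by Lemma~\ref{no max sigma1 p}; a cofinal $\Sigma_2$-definable map from a set contradicts $\Sigma_2$-collection, so $V_{\sigma^{\mathcal{P}}}$ is not $\Sigma_2$-admissible.
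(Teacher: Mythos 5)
Your proposal is correct and follows essentially the same route as the paper: upward persistence of the $\Sigma_1^{\mathcal{P}}$ halting statement for one direction of (1), level-enumeration plus $\Sigma_1^{\mathcal{P}}$-truth testing for the other, the cardinality of computable power-set stages for (3), the halting set for (4), and Lemma \ref{early witnesses} together with the cofinal $\Sigma_2$ map $i\mapsto\alpha(\psi_i)$ for (6). The only deviations are cosmetic: you prove (5) by counting parameter-free programs rather than by exhibiting $h_{\mathcal{P}}\in\mathcal{P}(\omega)$ as a non-computable element, and in (6) you get $\lambda^{+}<\sigma^{\mathcal{P}}$ directly from (3) rather than by computing $V_{\delta^{+}}$.
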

\begin{proof}
\begin{enumerate}
\item Let $P$ be a halting power-OTM-program. 
The proposition that $P$ halts is $\Sigma_{1}^{\mathcal{P}}$ and, by assumption, it holds in $V$; thus, by definition of $\sigma^{\mathcal{P}}$, it also holds in $V_{\sigma^{\mathcal{P}}}$. Thus, the whole computation of $P$ exists in $V_{\sigma^{\mathcal{P}}}$, and since $V_{\sigma^{\mathcal{P}}}$ is transitive, so does its output. Consequently, every power-OTM-computable set is an element of 
 $V_{\sigma^{\mathcal{P}}}$. 

Conversely, let $\alpha<\sigma^{\mathcal{P}}$. By definition of $\sigma_{1}^{\mathcal{P}}$, there exists some $\Sigma_{1}^{\mathcal{P}}$-formula $\phi$ such that $\alpha(\phi)>\alpha$. But then, $V_{\alpha(\phi)}$ is power-OTM-computable without parameters: To see this, use $P_{\text{level-enum}}$ and check, for every generated $V$-stage $V_{\beta}$, whether $V_{\beta}\models\phi$. As soon as this happens to be true (which is guaranteed to eventually be the case), the program halts and outputs a code for $V_{\beta}$.
\item This follows immediately from the proof for (1). 
\item Assume for a contradiction that $\sigma^{\mathcal{P}}$ is not a cardinal. Thus, there exists an ordinal $\alpha<\sigma^{\mathcal{P}}$ with $\text{card}(\alpha)=\text{card}(\sigma^{\mathcal{P}})$. By (2), there exists a (parameter-freely) power-OTM-computable stage $V_{\beta}$ with $\beta\geq\alpha$. Now $\text{card}(V_{\beta})\geq\alpha$. But if $V_{\beta}$ is power-OTM-computable without parameter, then so is $V_{\beta+1}=\mathcal{P}(V_{\beta})$, and thus, by Lemma \ref{enumerating the universe}, so is $\kappa:=\text{card}(V_{\beta+1})$. It follows that $\kappa\in V_{\sigma^{\mathcal{P}}}$, so $\kappa<\sigma^{\mathcal{P}}$, and this implies $\text{card}(\alpha)<\kappa\leq\sigma^{\mathcal{P}}$, a contradiction.

%Nach Lemma \ref{power otm basics} ist $\kappa:=\text{card}(V_{\beta})$ Power-OTM-berechenbar. 
%Folglich ist auch auch $\mathcal{P}(\kappa)$ Power-OTM-berechenbar, und also auch \lambda:=\text{card}($\mathcal{P}(\kappa)\in V_{\sigma^{\mathcal{P}}$. Wegen $\alpha\leq\beta<\kappa$ folgt $\text{card}(\alpha)<\text{\card}(\mathcal{P}(\kappa))$. Da $\
% und eine Surjektion $g:\beta\rightarrow\sigma^{\mathcal{P}}$. 

Thus $\sigma^{P}$ is a cardinal. Concerning its cofinality, the function $f:\omega\rightarrow V_{\sigma^{\mathcal{P}}}$ with $f(k)=\alpha(\psi)$ is $\Sigma_{2}$ over $V_{\sigma^{\mathcal{P}}}$ and we have $\omega\in V_{\omega^{\mathcal{P}}}$. But, by definition of $\sigma^{\mathcal{P}}$,  $f[\omega]$ is is unbounded in $\sigma^{\mathcal{P}}$. Hence $\text{cf}(\sigma^{\mathcal{P}})=\omega$. 
%Ist $V_{\alpha}$ Power-OTM-berechenbar, so offenbar auch $\mathcal{P}(V_{\alpha})$. Also ist $\sigma^{\mathcal{P}}$ eine Limes
\item Since the power set operator is definable by an $\in$-formula, the statement $k\in\omega\wedge \mathcal{P}_{k}\downarrow$ is expressable as an $\in$-formula $\phi(k)$. If one uses separation with this formula from $\omega$, one obtains the halting set $h_{\mathcal{P}}$ for power-OTMs, which is not power-OTM-computable (which can be seen by the usual diagonal argument). 
\item Clearly, $\mathcal{P}(\omega)$ is power-OTM-computable, but, by (4), the same does not hold for $h_{\mathcal{P}}\in\mathcal{P}(\omega)$.
\item Separation, Pot and AC hold in every $V$-stage $V_{\delta}$, provided $\delta$ is a limit cardinal; the same holds for the KP-axioms, with the exception of $\Delta_{0}$-Collection. 

To see $\Delta_{0}$-collection, consider a $\Delta_{0}$-formula $\psi(x,y)$ and a set $X\in V_{\sigma^{\mathcal{P}}}$ with $V_{\sigma^{\mathcal{P}}}\models\forall{x\in X}\exists{y}\psi(x,y)$. Pick $\gamma<\sigma^{\mathcal{P}}$ such that $x\in V_{\gamma}$ and $V_{\gamma}$ has a parameter-freely power-OTM-computable code (which exists by definition of $\sigma^{\mathcal{P}}$). Now $\text{card}(\text{tc}(x))\leq\text{card}(V_{\gamma})$ and $\delta:=\text{card}(V_{\gamma})$ is parameter-freely power-OTM-computable using $P_{\text{card}}$. It follows that $\delta^{+}$ is also parameter-freely power-OTM-computable, and hence, so is (a code for) $V_{\delta^{+}}$; so $V_{\delta^{+}}\in V_{\sigma^{\mathcal{P}}}$. But now, by Lemma \ref{early witnesses}, if, for some $x\in X$, there exists $y$ such that $\psi(x,y)$, then such $y$ exists in $V_{\delta^{+}}$. Thus $V_{\delta^{+}}$ is as required by $\Delta_{0}$-collection.

%We now run through $X$. For each $x\in X$, we let $P_{\text{set-enum}}$ run until some $y$ with $\psi(x,y)$ is found. This $y$ is then stored on some scratch tape before we continue with the next element of $X$. Once the run through $X$ is complete, we have computed some set $Y$ such that $\forall{x\in X}\exists{y\in Y}\psi(x,y)$.\todo{Das funktioniert noch nicht so ganz, weil $X$ vielleicht nicht berechenbar ist. Das Ergebnis stimmt jedenfalls, Beweis noch überlegen.} 
%\todo{MORE HERE! CHECK THIS!} 
In the proof for (3), we gave an example for a function $f:\omega\rightarrow V_{\sigma^{\mathcal{P}}}$ that is $\Sigma_{2}$-definable over $V_{\sigma^{\mathcal{P}}}$ but has $f[\omega]\notin V_{\sigma^{\mathcal{P}}}$.
\end{enumerate}
\end{proof}

We can now show that our notion of ``computability with a power-set operator'' is different from the one proposed by Moss \cite{Moss}. Namely, in \cite{Moss}, power-recursive function are defined over so-called power-admissible sets, which are sets satisfying ZF with the replacement axiom restricted to $\Sigma_{1}$-formulas (see \cite{Moss}, p. 256).

\begin{corollary}
    Parameter-free power-OTM-computability is strictly stronger than power-recursion.
\end{corollary}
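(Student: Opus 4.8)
The plan is to compare the two notions by locating the ordinal at which each ``closes off.'' Write $\pi$ for the least ordinal such that $V_{\pi}$ is power-admissible in the sense of Moss, i.e. the least power-admissible level. By part (1) of the preceding theorem the parameter-free power-OTM-computable sets are exactly the elements of $V_{\sigma^{\mathcal{P}}}$, so it suffices to show (a) every power-recursive set lies in $V_{\sigma^{\mathcal{P}}}$, and (b) some element of $V_{\sigma^{\mathcal{P}}}$ is not power-recursive. I will extract both from the single inequality $\pi<\sigma^{\mathcal{P}}$ together with the fact that $\pi$ is itself parameter-free power-OTM-computable.

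First I would establish $\pi<\sigma^{\mathcal{P}}$ by the reflection argument already used for part (1). Design a parameter-free power-OTM $P$ that runs $P_{\text{level-enum}}$ and, for each generated level $V_{\alpha}$, decides whether $V_{\alpha}$ is power-admissible — this is decidable from a code for $V_{\alpha}$, since the schematically many axioms of ZF with $\Sigma_{1}^{\mathcal{P}}$-replacement can be checked by looping over all formula codes in $\omega$ and verifying each instance by quantifying over the coded set and using the $\mathcal{P}$-command for the power-set terms — and halts at the first such level, outputting codes for $\pi$ and $V_{\pi}$. Since power-admissible sets provably exist, $P$ halts in $V$; as ``$P$ halts'' is a true $\Sigma_{1}^{\mathcal{P}}$-statement, the definition of $\sigma^{\mathcal{P}}$ forces it to hold already in $V_{\sigma^{\mathcal{P}}}$, so the entire halting computation, hence its output, lies in the transitive set $V_{\sigma^{\mathcal{P}}}$. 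Thus $\pi,V_{\pi}\in V_{\sigma^{\mathcal{P}}}$, giving $\pi<\sigma^{\mathcal{P}}$ and showing that $\pi$ is parameter-free power-OTM-computable.

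With $\pi<\sigma^{\mathcal{P}}$ in hand, direction (a) is immediate: every power-recursive set has rank at most $\pi$, hence lies in $V_{\pi+1}\subseteq V_{\sigma^{\mathcal{P}}}$ (using that $\sigma^{\mathcal{P}}$ is a limit ordinal) and is therefore power-OTM-computable. For direction (b) I would take $\pi$ itself as the separating witness: it is power-OTM-computable by the previous paragraph, yet $\pi$ is the least non-power-recursive ordinal — the power-recursive ordinals being exactly those below the least power-admissible level, the power-analogue of $\omega_{1}^{CK}$ — so $\pi$ is not power-recursive. As a consistency check on the direction of the strictness, note that power-admissibility implies $\Sigma_{2}$-admissibility, since unfolding the $\Pi_{1}$ $\in$-definition of $\mathcal{P}$ turns $\Sigma_{1}^{\mathcal{P}}$-replacement into $\Sigma_{2}$-collection, whereas $V_{\sigma^{\mathcal{P}}}$ is \emph{not} $\Sigma_{2}$-admissible by part (6); so $V_{\sigma^{\mathcal{P}}}$ is genuinely not a power-admissible set, confirming that the power-OTM-computable sets overshoot the least power-admissible closure point.

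The main obstacle is the interface with Moss's precise definitions rather than the OTM side. I must verify from \cite{Moss} that (i) the power-recursive sets are exactly those of rank below $\pi+1$, so that (a) holds and the witness is correctly placed; (ii) $\pi$, the least power-admissible ordinal, is the supremum of the power-recursive ordinals and hence not itself power-recursive; and (iii) power-admissible sets provably exist, so that the search machine $P$ halts. The reflection step yielding $\pi<\sigma^{\mathcal{P}}$ and the decidability of power-admissibility on coded levels are routine given the machinery already developed.
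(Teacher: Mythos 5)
Your proposal is correct and follows essentially the same route as the paper: the decisive step in both is that the existence of a (least) power-admissible level is witnessed by a true $\Sigma_{1}^{\mathcal{P}}$ statement, hence reflects into $V_{\sigma^{\mathcal{P}}}$, giving $\pi<\sigma^{\mathcal{P}}$ while power-recursion is confined to the least power-admissible set. You merely spell out the details (the search machine, the two inclusions, and the interface with Moss's definitions) that the paper's two-line argument leaves implicit.
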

\begin{proof}
%\todo{Wenn das stimmt, sind Power-OTMs stärker als Power-recursion, weil die erste Power-admissible  unterhalb von $\sigma^{\mathcal{P}}$ liegen wird (denn ``Es gibt ein transitives Modell von Power-KP'' ist $\Sigma_{1}^{\mathcal{P}}$).}
The statement ``There is a transitive model of power-KP'' is clearly $\Sigma_{1}^{\mathcal{P}}$. Thus, the first power-admissible ordinal $\alpha$ is smaller than $\sigma^{\mathcal{P}}$.
\end{proof}

    In Moss \cite{Moss} p. 260, it is shown that, provided strongly inaccessible cardinals exist, the first ordinal $\alpha$ such that $V_{\alpha}$ is closed under power-recursive functions is smaller than the smallest such. The same is not true for power-OTMs: 
    
\begin{lemma}
Under the assumption that the respective cardinals exist, we have the following:
\begin{enumerate}
    \item For every $n\in\omega$, $\sigma^{\mathcal{P}}$ is greater than the $n$-th strongly inaccessible cardinal.
    \item $\sigma^{\mathcal{P}}$ is greater than the first strongly inaccessible limit of strongly inaccessible cardinals (i.e., the first $1$-inaccessible cardinal). 
    \item For every $k\in\omega$, $\sigma^{\mathcal{P}}$ is greater than the first $k$-inaccessible cardinal. 
\end{enumerate}
\end{lemma}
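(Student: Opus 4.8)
The plan is to reduce all three statements to one mechanism. Recall from the proof of part~(1) of the preceding theorem that every $\Sigma_{1}^{\mathcal{P}}$-sentence which holds in $V$ already holds in $V_{\sigma^{\mathcal{P}}}$: if $\phi$ is $\Sigma_{1}^{\mathcal{P}}$ and true, then $\alpha(\phi)$ is defined and, by Lemma~\ref{no max sigma1 p}, $\alpha(\phi)<\sigma^{\mathcal{P}}$, while a witness together with the finitely many power sets mentioned in $\phi$ already lives in $V_{\alpha(\phi)}\subseteq V_{\sigma^{\mathcal{P}}}$. Moreover, since $\sigma^{\mathcal{P}}$ is a limit cardinal, $V_{\sigma^{\mathcal{P}}}$ is closed under the power set operator, so that $\Delta_{0}^{\mathcal{P}}$-formulas with parameters in $V_{\sigma^{\mathcal{P}}}$ are absolute between $V_{\sigma^{\mathcal{P}}}$ and $V$. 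Granting this, it suffices to write each of the three existence assertions as a $\Sigma_{1}^{\mathcal{P}}$-sentence: being true in $V$ by hypothesis, it reflects to $V_{\sigma^{\mathcal{P}}}$, and by absoluteness the reflected witness is a genuine cardinal of the required kind lying strictly below $\sigma^{\mathcal{P}}$.

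The crux is therefore to check that strong inaccessibility and its iterates are $\Delta_{0}^{\mathcal{P}}$. The decisive observation is that in $\mathcal{L}_{\in,\mathcal{P}}$ a quantifier over all subsets, or all functions, of a set can be bounded by a $\mathcal{P}$-term. Thus ``$\kappa$ is regular'' is rendered as $\forall\lambda<\kappa\,\forall f\in\mathcal{P}(\lambda\times\kappa)$ (if $f$ is a function from $\lambda$ into $\kappa$ then its range is bounded in $\kappa$), and ``$\kappa$ is a strong limit'' as $\forall\lambda<\kappa\,\exists g\in\mathcal{P}(\mathcal{P}(\lambda)\times\kappa)$ ($g$ injects $\mathcal{P}(\lambda)$ into $\kappa$); both have only bounded quantifiers, so ``$\kappa$ is strongly inaccessible'' is $\Delta_{0}^{\mathcal{P}}$. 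By induction on $k\in\omega$ one then shows that ``$\kappa$ is $k$-inaccessible'' is $\Delta_{0}^{\mathcal{P}}$ for every fixed $k$: the step from $k$ to $k+1$ only prefixes the bounded block $\forall\xi<\kappa\,\exists\mu<\kappa\,(\xi<\mu\wedge\mu\text{ is }k\text{-inaccessible})$ to the $\Delta_{0}^{\mathcal{P}}$-formula given by the induction hypothesis. All $\mathcal{P}$-terms here are applied to sets of rank below $\kappa$, so these formulas are computed correctly in any $V_{\alpha}$ closed under power sets with $\kappa\in V_{\alpha}$.

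With this in place the three parts follow uniformly. For~(3), ``there is a $k$-inaccessible cardinal'' is the $\Sigma_{1}^{\mathcal{P}}$-sentence $\exists\kappa\,(\kappa\text{ is }k\text{-inaccessible})$; assuming such a cardinal exists it is true in $V$, hence reflects to $V_{\sigma^{\mathcal{P}}}$, and the witness is a genuine $k$-inaccessible below $\sigma^{\mathcal{P}}$, so $\sigma^{\mathcal{P}}$ exceeds the least one; part~(2) is the case $k=1$. For~(1) one uses instead the $\Sigma_{1}^{\mathcal{P}}$-sentence asserting the existence of $n$ strongly inaccessible cardinals, $\exists\kappa_{1}<\dots<\kappa_{n}\,\bigwedge_{i}(\kappa_{i}\text{ is strongly inaccessible})$; reflection now places $n$ genuine inaccessibles below $\sigma^{\mathcal{P}}$, whence $\sigma^{\mathcal{P}}$ surpasses the $n$-th one. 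The step I expect to require the most care is the second paragraph: pinning down the $\mathcal{P}$-bounded renderings of regularity and the strong-limit property, and checking that the resulting $\Delta_{0}^{\mathcal{P}}$-formulas are evaluated correctly in the ambient levels so that no spurious witnesses arise; once this bounding trick is isolated, the remainder is the uniform reflection argument above.
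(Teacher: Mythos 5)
Your proof is essentially correct, but it takes a genuinely different route from the paper's. The paper argues computationally: it exhibits explicit parameter-free power-OTM search procedures that test an ordinal for strong (and, inductively, $k$-) inaccessibility --- regularity by searching through $\mathcal{P}(\alpha)$, the strong-limit property by forming $\mathcal{P}(\beta)$ and applying $P_{\text{card}}$ --- and then searches the enumeration of $V$ for the least cardinal passing the test; by part (1) of the preceding theorem, every parameter-freely power-OTM-computable set lies in $V_{\sigma^{\mathcal{P}}}$, so these cardinals are all below $\sigma^{\mathcal{P}}$. You instead argue model-theoretically from the definition of $\sigma^{\mathcal{P}}$: the existence assertions are $\Sigma_{1}^{\mathcal{P}}$ because quantifiers over subsets and functions can be bounded by $\mathcal{P}$-terms, true $\Sigma_{1}^{\mathcal{P}}$-sentences reflect below $\sigma^{\mathcal{P}}$ (the same principle the paper itself invokes in the proof of part (1) of the theorem), and since $\sigma^{\mathcal{P}}$ is a limit cardinal the reflected witnesses are genuine. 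The two routes are two faces of the same equivalence established by the theorem; yours avoids describing algorithms at the price of having to verify carefully that the inaccessibility predicates are $\Delta_{0}^{\mathcal{P}}$ and are evaluated correctly at the relevant levels, a point you rightly flag as the delicate one. One concrete slip to repair: your rendering of the strong-limit property, $\exists g\in\mathcal{P}(\mathcal{P}(\lambda)\times\kappa)$ with $g$ injecting $\mathcal{P}(\lambda)$ into $\kappa$, only yields $2^{\lambda}\leq\kappa$ rather than $2^{\lambda}<\kappa$ (under CH it would certify $\aleph_{1}$ as a strong limit); you need the injection to land in some $\mu<\kappa$, i.e.\ $\forall\lambda<\kappa\,\exists\mu<\kappa\,\exists g\in\mathcal{P}(\mathcal{P}(\lambda)\times\mu)(\dots)$, which is still $\Delta_{0}^{\mathcal{P}}$, so the overall argument is unaffected.
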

%Assuming that such cardinals do exist, it is easy to see \todo{DO IT!} that $\sigma^{P}$ is larger than the first, second etc. strongly inaccessible cardinal, larger than the first strongly inaccessible limit of such etc.\todo{evaluate Mahlo-ness}
\begin{proof}
\begin{enumerate}
    \item We enumerate $V$. For every ordinal $\alpha$ thus produced, we check (i) whether it is a regular cardinal (for this, it suffices to search through $\mathfrak{P}(\alpha)$) and (ii) whether $|\mathfrak{P}(\beta)|<\alpha$ for all $\beta<\alpha$ (which can be done by producing the power set of $\beta$ and then applying $P_{\text{card}}$ to it). By assumption, the answer will be positive at some point, at which we return $\alpha$, thus computing the first strongly inaccessible cardinal. 
    \item Using the routine described in the proof for (1), we can test a cardinal for being strongly inaccessible. We can then use this to search for a cardinal which is itself strongly inaccessible and has unboundedly many such cardinals below.
    \item Inductively, there is a power-OTM-program for deciding the class of $k$-inaccessible cardinals, for every $k\in\omega$ (in fact, using recursion there is such a program that does this uniformly in $k$). Then again, we can search for the first cardinal that has this property.
\end{enumerate}
\end{proof}

\begin{remark}
    The above could, of course, be further iterated.
\end{remark}

\section{Realizability with Power-OTMs}

Realizability is defined by replacing OTMs with arbitrary set parameters with power-OTMs; we obtain two different notions of realizability, depending on whether ordinal parameters are allowed or not; however, all of the below results hold true for both of these notions, for much the same reasons. We therefore do not emphasize the distinction very much. Many of the results are, again, analogous to those obtained by Passmann in his investigation of SRM-realizability (\cite{Passmann}, pp. 320--323). However, in the case of the axiom of choice, our setting differs from that of Passmann.

\begin{lemma} [Cf. Passmann, \cite{Passmann}]
The power-realizable statements include all axioms of KP,\footnote{See Passmann, \cite{Passmann}, Theorem 4.4.  The realizability of the KP axioms was already proved in \cite{CGP} for ordinary OTMs; the arguments carry over verbatim to power-OTMs.} and the power set axiom,\footnote{Passmann, \cite{Passmann}, Theorem 4.4.} and are closed under intuitionistic predicate logic, but not under classical predicate logic.\footnote{Cf. Passmann, \cite{Passmann}, Theorem 4.2.} 
There are instances of the separation axiom that are not power-realizable.\footnote{Cf. Passmann, \cite{Passmann}, Theorem 4.5.}
\end{lemma}

The last result makes it natural to ask how much of separation is power-realizable. This is answered by the next theorem.

As a preparation for the proof of part (2) of Theorem \ref{power realizable sep finetuning}, we need to define the halting problem for power-OTMs. Note that the definition is not obvious, as the definition of power-OTM-computations implicitly quantifies over codings. There are thus two ways to interpret the halting problem: Once as asking whether $P^{\hat{F}}(\rho)$ will halt for \textit{all} encodings $\hat{F}$ of $\mathfrak{P}$, and once as asking whether there is an encoding $\hat{F}$ of $\mathfrak{P}$ such that $P^{\hat{F}}(\rho)$ halts. For our purposes, it is the second variant that we need. 

\begin{defini}
The halting problem $\mathcal{H}_{\text{power}}$ for Power-OTMs with ordinal parameters is the class of all pairs $(P,\rho)$ where $P$ is a power-OTM-program, $\rho$ is an ordinal such that, for some coding $\hat{F}$ of $\mathfrak{P}$, we have $P^{\hat{F}}(\rho)$. 
\end{defini}

As this definition differs slightly from the standard way of defining a halting set, we make sure that the usual argument for the unsolvability of the halting problem still applies:

\begin{lemma}{\label{power halting unsolvable}}
There is no pair $(P,\rho)$ of a power-OTM-program $P$ and an ordinal $\rho$ so that, for every coding $\hat{F}$ of $\mathfrak{P}$, $P^{\hat{F}}(\rho)$ decides $\mathcal{H}_{\text{power}}$. 
\end{lemma}
\begin{proof}
Assume for a contradiction that $(P,\rho)$ is such a pair. 
Using $(P,\rho)$, we can then construct a power-OTM-program $D$ which, in the parameter $\rho$, shows the following behaviour for every input $(Q,\xi)$ ($Q$ a power-OTM-program, $\xi$ an ordinal) and every coding $\hat{F}$ of $\mathfrak{P}$: 
\begin{itemize}
\item If there is a coding $\hat{G}$ of $\mathfrak{P}$ such that $Q^{\hat{G}}(\xi)\downarrow$, then $D^{\hat{F}}((Q,\xi),\rho)\uparrow$.
\item If there is no coding $\hat{G}$ of $\mathfrak{P}$ such that $Q^{\hat{G}}(\xi)\downarrow$, then $D^{\hat{F}}((Q,\xi),\rho)\downarrow$.
\end{itemize}
Let $\hat{F}$ be a coding of $\mathfrak{P}$. We consider the behaviour of $D^{\hat{F}}((D,\rho),\rho)$.
\begin{itemize}
\item If $D^{\hat{F}}((D,\rho),\rho)\downarrow$, then there clearly exists a coding $\hat{G}$ of $\mathfrak{P}$ such that $P^{\hat{F}}((D,\rho),\rho)\downarrow$ (namely, $\hat{F}$); by definition, this means that $D^{\hat{F}}((D,\rho),\rho)\uparrow$, a contradiction. 
\item If $D^{\hat{F}}((D,\rho),\rho)\uparrow$ then, by definition of $D$, there must be a coding $\hat{G}$ of $\mathfrak{P}$ such that $D^{\hat{G}}((D,\rho),\rho)\downarrow$ (for it there was no such $\hat{G}$, then, by definition, $D^{\hat{F}}((D,\rho),\rho)$ would halt). But as in (1), this implies that $D^{\hat{G}}((D,\rho),\rho)\uparrow$, a contradiction.
\end{itemize}
\end{proof}

\begin{lemma}{\label{power halting sigma2}}
There is a $\Sigma_{2}$-Formel $\phi(u,v):\Leftrightarrow\exists{x}\forall{y}\psi(x,y,u,v)$ (where $\psi$ is $\Delta_{0}$) such that, for every power-OTM-program $P$ and every ordinal $\rho$, the formula $\phi(P,\rho)$ is true if and only if $(P,\rho)\in\mathcal{H}_{\text{power}}$. 
\end{lemma}
\begin{proof}

%Es sei $\phi(k,\rho)$ die Formel $\mathcal{P}_{k}(\rho)\downarrow$, die also besagt, dass das $k$-te Power-OTM-Programm (ggf. im Parameter $\rho$) hält. 
%Dann ist $\phi(k,\rho)$ eine $\Sigma_{2}$-Formel, denn es lässt sich ausdrücken als: %: Dazu bezeichnen wir für eine Zweiband-OTM-Konfiguration $s$ mit $t(s)$ die Menge, die in $s$ durch den Inhalt des zweiten Arbeitsbandes codiert wird. Weiter verstehen wir 
%Unter einer ``partiellen Codierung von $\mathfrak{P}$'' verstehen wir eine Menge $f$, die Teilmenge einer Codierung von $\mathfrak{P}$ ist. 
$\phi(P,\rho)$ can be expressed as a $\Sigma_{2}$-formula as follows:

There exist a set $f$ and a sequence $s=:(s_{\iota}:\iota<\alpha+1)$ of configurations (where $s_{\iota}=(t_{\iota}^{0},h_{\iota}^{0},...,t_{\iota}^{4},h_{\iota}^{4}, z_{\iota})$; here, $t_{\iota}^{i}$ and $h_{\iota}^{i}$ denote the tape content of and the head position on the $i$-th tape, for $1\leq i\leq 4$, and $z_{\iota}$ denotes the inner state at time $\iota$), such that, for every set $x$, the following are true: 
\begin{enumerate}
\item $f$ is a partial function from $\mathfrak{P}(\text{On})$ to $\mathfrak{P}(\text{On})$.
\item For every element $a\in\text{dom}(f)$, if $a^{\prime}$ is the set coded by $a$ and $b$ is the set coded by $f(a)$, then $x\subseteq a^{\prime}\Leftrightarrow x\in b$. %die von $a$ codierte Menge, und ist ferner $b$ die von $f(a)$ codierte Menge, so gilt $x\subseteq a^{\prime}$ genau dann, wenn $x\in b$.
\item $s_{0}$ is the starting configuration of $\mathcal{P}_{k}(\rho)$.
\item $s_{\alpha}$ is a halting configuration of $\mathcal{P}_{k}(\rho)$.
\item $s_{\lambda}$ is, for every limit ordinal $\lambda\leq\alpha$, , obtained through the liminf-rule from $(s_{\iota}:\iota<\lambda)$.
\item If the command to be carried out at time $\iota$ is not the power set command, then $s_{\iota+1}$ is obtained from $s_{\iota}$ by carrying out the respective Turing command. 
\item If the command to be carried out at time $\iota$ is the power set command and the state is to be changed to $z^{\prime}$, then $s_{\iota+1}=(t_{\iota}^{0},h_{\iota}^{0},f(t_{\iota}^{1},h_{\iota}^{1},z_{\iota},z)$.
%$s_{\iota+1}$ gehört zum Definitionsbereich von $f$ und $s_{\iota+1}=f(s_{\iota})$, falls der auszuführende 
%\item $x\in t(c_{\iota+1})\leftrightarrow x\subseteq t(s_{\iota})$ für alle $\iota\leq\alpha$, für die der auszuführende Befehl im Zustand $s_{\iota}$ der Potenzmengenoperator ist.
\end{enumerate}
With respect to (2), note that ``$a$ codes $a^{\prime}$'' can be expressed by stating that, for every transitive KP-model $M\ni a$, the formula that $a$ codes $a^{\prime}$ holds true in $M$, so that (2) is expressable as a $\Pi_{1}$-formula. For the other clauses, this is obvious. 

 Thus, $\phi(k,\rho)$ is expressable as a $\Sigma_{2}$-formula. 
\end{proof}

\begin{lemma}{\label{power realizable sep finetuning}}
    \begin{enumerate}
        %\item All axioms of KP are power-OTM-realizable.\footnote{See Passmann, \cite{Passmann}, Theorem 4.4.}
        %\item The power set axiom is power-realizable.\footnote{Passmann, \cite{Passmann}, Theorem 4.4.}
        \item All instances of the $\Sigma_{1}$-separation scheme are power-realizable.
        \item There are instances of the $\Sigma_{2}$-separation scheme which are not power-realizable.
        %\item The set of power-OTM-realizable statements is closed under intuitionistic predicate logic, but not under classical predicate logic.\footnote{Cf. Passmann, \cite{Passmann}, Theorem 4.2.}
    \end{enumerate}
\end{lemma}
\begin{proof}
\begin{enumerate}
%   \item The realizability of the KP axioms was already proved in \cite{CGP} for ordinary OTMs; the arguments carry over verbatim to power-OTMs.
%   \item Trivial. 
   \item Let a $\Sigma_{1}$-formula $\phi(y):\Leftrightarrow\exists{x}\psi(x,y)$ be given (where $\psi$ only contains bounded quantifiers), along with a set $X$. We want to compute (a code for) the set $\{y\in X:\phi(y)\}$.

The algorithm now works as follows: Let a set $y$ be given. We first compute the transitive hull $t:=\text{tc}(\{y\})$ (which is possible on a parameter-free standard OTM, i.e., without the power set operator), and, using Lemma \ref{enumerating the universe}(3), $\kappa:=\text{card}(t)$. By iterating the power set operator along $\kappa$, we compute a code for $V_{\kappa}$, and from this a code for $V_{\kappa^{+}}$. By Lemma \ref{early witnesses}, $V_{\kappa^{+}}$ contains some $x$ with $\psi(x,y)$, if one exists. Thus, it suffices to search $V_{\kappa^{+}}$ for such an $x$ in order to determine the truth value of $\exists{x}\psi(x,y)$. If so, $y$ becomes part of our set, otherwise, it does not.

   \item As Passmann does it in his proof of \cite{Passmann}, Theorem 4.5 (and in analogy with what was done for ``plain'' OTMs in \cite{CarlBuch}, Proposition 9.4.4), we can use the formula expressing the $k$-th power-OTM-program will halt in the parameter $\rho\in\text{On}$. It was checked in Lemma \ref{power halting sigma2} that this is $\Sigma_{2}$. %It only needs to be checked that this formula is in fact $\Sigma_{2}$:

Now, if $P$ was a power-OTM-program and $\rho\in\text{On}$ a parameter such that $(P,\rho)\Vdash_{\text{OTM}}^{\textbf{P}}\forall{X\subseteq\omega\times\text{On}}\exists{Y}\forall{z}(z\in X\leftrightarrow(z\in X\wedge\phi(z)))$, then, for every given pair $(k,\xi)\in\omega\times\text{On}$, the question whether $\mathcal{P}_{k}(\xi)\downarrow$ would be decidable on a power-OTM by computing the set separated from $\{(k,\xi)\}$ with $\phi$ and checking whether it is empty. But, as we saw in Lemma \ref{power halting unsolvable}, this question is not decidable on a power-OTM.
%But the usual diagonalization argument shows that this question cannot be power-OTM-decidable. 
   
%   \item The proof for OTM-realizability in \cite{CGP} carries over without major modifications. Taking $\phi(P,\rho)$ to be the statement ``The power-OTM-program $P$, when run in the parameter $\rho$, will eventually halt or not halt'', it is clear that $\phi(p,\rho)$ is classically a logical tautology, but not power-OTM-realizable (since a realizer would amount to a halting problem solver for power-OTMs on a power-OTM). 
\end{enumerate}
\end{proof}

\begin{remark}
    The same results hold verbatim for power-$r$-realizability, which, as we have seen, is identical to power-OTM-realizability.
\end{remark}

In Passmann's setting, the realizability of the axiom of choice is trivial, as SRMs have access to a choice operator as a primitive operation (\cite{Passmann}, p. 312). It is noted in Theorem 4.7 that all $\Pi_{2}$-consequences of CZF have an SRM$^{+}$-computable witness function. We note here that the assumption can be weakened:

\begin{proposition}
Every formula of the form $\forall{x}\exists{y}\phi(x,y)$ (with $\phi$ a $\Delta_{0}$-formula) which is true in $V$ is SRM$^{+}$-realizable. In fact, the same holds for all formulas of the form $\exists{x}\forall{y}\exists{z}\phi(x,y,z)$ (with $\phi$ a $\Delta_{0}$-formula).\footnote{This result is the analogue of \cite{CGP}, Theorem $48$, where the same was shown for plain OTMs restricted to $L$.}
\end{proposition}
\begin{proof}
Suppose that $\forall{x}\exists{y}\phi(x,y)$ is true in $V$. Given $x$, enumerate the universe as explained in Lemma \ref{enumerating the universe} and search each new $V$-level for a $y$ such that $\phi(x,y)$ (using the ability of SRMs to evaluate bounded truth predicates). Once a $V$-level containing such $y$ has been found, use the ``TAKE'' operation to pick the minimal such $y$. 

Now suppose that $\exists{x}\forall{y}\exists{z}\phi(x,y,z)$ is true in $V$. Pick an appropriate $x$. This can be given to the program in advance. All that remains is then to realize $\forall{y}\exists{z}\phi(x,y,z)$, which is a true statement that can now be realized by the first part.
\end{proof}

In most formulations, the realizability of the axiom of choice is trivial (see \cite{CGP}, Theorem $50$). A non-trivial formulation is the one given in \cite{CGP}, p. 24: 
$$\forall{X}(\forall{x\in X}\neg\forall{y}y\notin x\rightarrow \exists{f:X\rightarrow\bigcup X}\forall{x\in X}f(x)\in x).$$ 
We will use AC to denote this formulation below. Even in this formulation, AC is SRM$^{+}$-realizable due to the build-in picking operation. This, however, is not the case for power-OTMs:

\begin{lemma}{\label{ac not power realizable}}
The power-realizability of AC is independent of ZFC.
\end{lemma}
\begin{proof}
It is easy to see that $\forall{x\in X}\neg\forall{y}y\notin x$ is power-realizable if and only if it is OTM-realizable if and only if it is true, in which case it is, e.g., power-realizable by the program returning $0$ on every input. We thus only need to consider the question whether choice functions for families of non-empty sets are power-OTM-computable. This was shown not to be the case if $V\neq\text{HOD}$ in \cite{Ca2024A}, Theorem 20(3). On the other hand, it is true if $V=L$.
\end{proof}

\section{Conclusion and further work}

We have seen that regarding the power set operation as a ``basic operation performed on sets'' (Rathjen \cite{Rathjen:Power}, p. 3) does indeed strongly increase the computational power of OTMs, but does not considerably alter their ability to realize impredicative instances of the separation scheme. It would be natural to consider notions of effective reducibility such as those considered in \cite{Ca2016}, \cite{Ca2018} for OTMs based on power-OTMs. It would also be interesting to study the structure of the parameter-freely power-OTM-clockable ordinals. Finally, one could, in analogy with the work of Moss \cite{Moss}, study the structure of the parameter-free power-OTM-degrees.


\begin{thebibliography}{}
\bibitem{Ca2016} M. Carl. Generalized Effective Reducibility. In: A. Beckmann et al. (eds.) Pursuit of the Universal. 12th Conference on Computability in Europe. (2016)
\bibitem{Ca2018} M. Carl. Effectivity and Reducibility with Ordinal Turing Machines. Computability, vol. 10(4) (2021)
\bibitem{Ca2024} M. Carl. Recognizable Realizability. Preprint, arXiv:2408.07030v1 (2024)
\bibitem{Ca2024A} M. Carl. Full Generalized Effective Reducibility. Preprint,  arXiv:2411.19386v2 (2024) 
\bibitem{CarlBuch} M. Carl. Ordinal Computability. An Introduction to Infinitary Machines. De Gruyter Berlin Boston (2019)
\bibitem{CGP} M. Carl, L. Galeotti, R. Passmann. Realisability for infinitary intuitionistic set theory. Annals of Pure and Applied Logic vol. 174(6) (2023)
\bibitem{Dawson} B. Dawson. Ordinal Time Turing Computations. PhD Thesis, University of Brisol (2010)
\bibitem{Feferman} S. Feferman. Set-theoretical foundations of category theory. In: S. MacLane (ed.) Reports of the Midwest Category Seminar III. Springer Berlin Heidelberg New York (1969) \url{file:///C:/Users/DOZMATCAM/Downloads/BFb0059139.pdf}
\bibitem{Hamkins:MO} Joel David Hamkins (\url{https://mathoverflow.net/users/1946/joel-david-hamkins}), Complexity of definable global choice functions, URL (version: 2024-06-08): \url{https://mathoverflow.net/q/472862}
\bibitem{Hamkins-Lewis} J. Hamkins, A. Lewis. Infinite Time Turing Machines. The Journal of Symbolic Logic, Vol. 65(2) (2000)
\bibitem{Koepke:OTMs} P. Koepke. Turing Computations on Ordinals. The Bulletin of Symbolic Logic. vol. 11(3)
\bibitem{Kunen} K. Kunen. Set Theory. An Introduction to Independence Proofs. Elsevier Amsterdam (1980)
\bibitem{Moss} L. Moss. Power Set Recursion. Annals of Pure and Applied Logic, vol. 71(3) (1995)
\bibitem{Rathjen:Power} M. Rathjen. Power Kripke-Platek set theory and the axiom of choice. Journal of Logic and Computation, vol. 30(1) (2020)
\bibitem{Passmann} R. Passmann. The first-order logic of CZF is intuitionistic first-order logic. The Journal of Symbolic Logic, vol. 89(1) (2022)

\end{thebibliography}
\end{document}